\newcommand{\At}{\widetilde{\mathbb{A}}}
\newcommand{\N}{\mathbb{N}}
\newcommand{\Q}{\mathbb{Q}}
\newcommand{\Z}{\mathbb{Z}}
\newcommand{\g}{\gamma}
\newcommand{\eg}{\text{\textbf{EG}}}
\newcommand{\Aa}[2]{\mathscr{A}({#1},{#2})}
\newcommand{\X}{\mathscr{X}}
\newtheorem{theorem}{Theorem}[section]
\newtheorem{lemma}[theorem]{Lemma}
\newtheorem{crl}[theorem]{Corollary}
\theoremstyle{definition}
\newtheorem{definition}[theorem]{Definition}
\newtheorem{example}[theorem]{Example}
\newtheorem{conjecture}[theorem]{Conjecture}
\theoremstyle{remark}
\numberwithin{equation}{section}
\newcommand{\ti}[1]{\tilde{#1}}
\begin{document}

\title{Unistructurality of cluster algebras of type $\At$}

\author{V\'eronique Bazier-Matte}
\address{D\'epartement de mat\'ematiques, Universit\'e de Sherbrooke, 2500, boul. de l'Universit\'e, Sherbrooke, Qc J1K~2R1, Canada}
\email{Veronique.Bazier-Matte@usherbrooke.ca}

\begin{abstract}
It is conjectured by Assem, Schiffler and Shramchenko in \cite{ASS14} that every cluster algebra is unistructural, that is to say, that the set of cluster variables determines uniquely the cluster algebra structure. In other words, there exists a unique decomposition of the set of cluster variables into clusters. This conjecture has been proven to hold true for algebras of type Dynkin or rank 2 \cite{ASS14}. The aim of this paper is to prove it for algebras of type $\At$. We use triangulations of annuli and algebraic independence of clusters to prove unistructurality for algebras arising from annuli, which are of type $\At$. We also prove the automorphism conjecture \cite{ASS14} for algebras of type $\At$ as a direct consequence.
\end{abstract}

\thanks{The author would like to thank Pr. Ibrahim Assem for his support and guidance. The author also wishes to thank Prs Vasilisa Shramchenko and Ralf Schiffler and M. Guillaume Douville for fruitful discussions. Finally, the author would like to express her gratitude to the NSERC for the Canada Graduate Scholarships in Master’s Program.}


\maketitle


\section{Introduction}

Cluster algebras were introduced in 2002 by Fomin and Zelevinsky in \cite{FZ02} in order to create an algebraic framework for  canonical bases and total positivity.  Since then, cluster algebras have been proved to relate to various areas of mathematics, for example, combinatorics, representation theory of algebras, Lie theory, Poisson geometry and Teichm\"uller theory. 

Cluster algebras are commutative $\Z$-algebras with a distinguished set of generators, called cluster variables, grouped into sets called clusters. The set of all cluster variables is constructed recursively from a set of initial cluster variables using an operation called mutation. Every mutation defines a new cluster differing from the preceding one by a single cluster variable. A conjecture from Assem, Schiffler and Shramchenko in \cite{ASS14} states that if two clusters generate the same cluster algebra, then each cluster is obtained from the other by a sequence of mutations, that is, there exists a unique decomposition of the set of cluster variables into clusters. It is then said that such a cluster algebra is unistrucutural. The authors already proved it for cluster algebras of rank 2 or of Dynkin type. In this paper, we prove that cluster algebras of type $\At$ are unistructural.

To do so, we use triangulations of marked surfaces, defined by \cite{FST08} and lifted triangulations of marked surfaces \cite{ST09, Sch10}. Indeed, cluster variables are in one-to-one correspondence with isotopy classes of certain curves in the surface that are called arcs. Clusters are in bijection with triangulations, which are maximal sets of compatible arcs. Hence, we study cluster algebras through triangulations of annuli.

Finally, we prove the automorphism conjecture, formulated in \cite{ASS14}, for cluster algebras of type $\At$ as a direct corollary of the unistructurality of these algebras. This conjecture states that, for $\mathscr{A}$ a cluster algebra, $f:\mathscr{A} \rightarrow \mathscr{A}$ is a cluster automorphism if and only if $f$ is an automorphism of the ambient field which restricts to a permutation of the set of cluster variables. This conjecture is related to Conjecture 7.4(2) in \cite{FZ07} of Fomin and Zelevinsky, restated by \cite{ASS14} as follows. Two cluster variables $x$ and $x'$ in a cluster algebra $\Aa{X}{Q}$ are compatible if and only if for each cluster $Y$ of $\Aa{X}{Q}$ containing $x$, it is possible to write $x'$ as a Laurent polynomial in $Y$ of the reduced form $\frac{P}{M}$ where $P$ is a polynomial in the variables of $Y$ and $M$ is a monomial in the variables of $Y$ excluding $x$. This conjecture has been proven for cluster algebras arising from surfaces, so, in particular, for cluster algebras of type $\At$, see \cite{FST08}.

\section{Preliminaries}

\subsection{Cluster algebras}
A \emph{quiver} $Q$ is a quadruple $(Q_0, Q_1, s, t)$ such that $Q_0$ is a set whose elements are called \emph{points}, $Q_1$ is a set whose elements are called \emph{arrows} and $s$, $t:~Q_1 \rightarrow Q_0$ are two maps which associate to each arrow two points respectively called its \emph{source} and its \emph{target}. For $i \in Q_0$, we denote $i^{+}$ the set of arrows in $Q_1$ of source $i$. Similarly, $i^{-}$ is the set of arrows in $Q_1$ of target $i$
As part of the study of cluster algebras, one looks only at quivers with neither loops (arrows $\alpha \in Q_1$ such that $s(\alpha) = t(\alpha)$) nor 2-cycles (pairs of arrows $(\alpha, \beta) \in Q_1 \times Q_1$ such that $t(\alpha) = s(\beta)$ and $t(\beta) = s(\alpha)$).

Let $K$ be a field and $k$ a subfield of $K$. A set $Y = \left\{ y_1, \dots, y_n \right\} \subseteq K$ is \emph{algebraically independent} over $k$ if, for all polynomials $f \in k\left[X\right]$, $f \neq 0$, we have $f(y_1, \dots, y_n) \neq 0$.
A \emph{seed} $(X,Q)$ consists of a quiver $Q$ whose points are $Q_0 = \{1,2,\dots n \}$ and a set $X=\{x_1, \dots x_n\}$, algebraically independent over $\mathbb{Q}$, where we agree that $x_i$ corresponds to the point $i$. The set $X$ is called a \emph{cluster}.
The \emph{mutation of a seed} $(X,Q)$ in direction $x_k$ (or in direction $k$ if there is no ambiguity) transforms $(X,Q)$ into a new seed $\mu_{x_k}(X,Q) = (\mu_{x_k}(X), \mu_k(Q))$ where $\mu_k(Q)$ is a quiver obtained by achieving the followings steps on $Q$:
\begin{enumerate}
\item reverse all arrows incident to $k$;
\item for all pairs of arrows $\alpha, \beta \in Q_1$ such that $t(\alpha) = k = s(\beta)$, add a new arrow $\delta$ such that $s(\delta) = \alpha$ and $t(\delta) = \beta$;
\item remove one-by-one the ensuing 2-cycles,
\end{enumerate}
and $\mu_{k_x}(X) = X \setminus \{x_k\} \cup \{x_k'\}$ with $x_k'$ such that
\[
x_kx_k' = \prod_{\alpha \in k^{+}} x_{t(\alpha)} + \prod_{\alpha \in k^{-}} x_{s(\alpha)}.
\]
An empty product is put equal to 1. We call the equality above the \emph{exchange relation}.
We denote by $\mathscr{X}$ the union of all possible clusters obtained from the seed $(X,Q)$ by successive mutations and we call its elements \emph{cluster variables}. Two cluster variables are said to be \emph{compatible} if there exists a cluster containing both. The \emph{cluster algebra} $\mathscr{A}(X,Q)$ is the $\Z$-subalgebra of the \emph{ambient field} $\mathcal{F} = \Q(x_1,\dots,x_n)$ generated by $\mathscr{X}$.

Observe that quiver mutation is involutive and, hence, mutations of quivers induce an equivalence. It is said that two quivers $Q$ and $R$ are \emph{mutation-equivalent} if there exists a sequence of mutations $\mu = \mu_{k_n} \dots \mu_{k_1}$ such that $R = \mu(Q)$. A cluster algebra $\Aa{X}{Q}$ is of \emph{type $\At_{p,q}$} if $Q$ is mutation-equivalent to the following quiver:
\begin{figure}[H]
\includegraphics{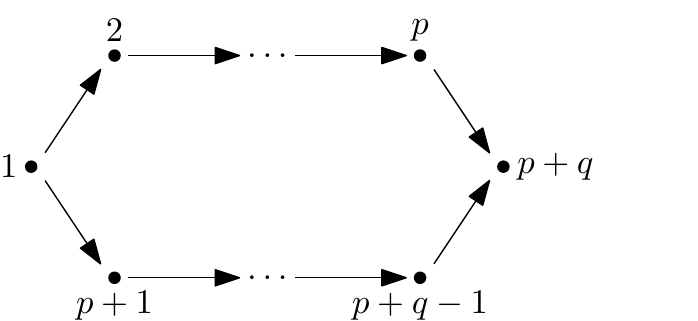}
\end{figure}

In order to prove the unistructurality of cluster algebras of type $\At$, we need a few properties of cluster algebras. One of them is the Laurent phenomenon, see \cite{FZ02}. It asserts that, for any cluster algebra $\mathscr{A}(X,Q)$ and for any cluster $Y = \{y_1, \dots, y_n \}$ in this algebra, each cluster variable can be written as a Laurent polynomial in $Y$, that is, for all $x \in \X$,
\[x = \frac{P(y_1, \dots, y_n)}{y_1^{d_1} \dots y_n^{d_n}}\] 
with $P \in \Z[y_1,\dots, y_n]$ and $d_i \in \N$. In particular,
\[ \X \subseteq \Z[y_1,\dots, y_n, y_1^{-1}, \dots, y_n^{-1}]. \]

Another interesting result about cluster algebras is the positivity theorem \cite{LS15}. It states that the numerator $P$ of a cluster variable written as a reduced Laurent polynomial has only non-negative coefficients. In particular, $P \in \N[y_1,\dots, y_n]$ and
\[ \X \subseteq \N[y_1,\dots, y_n, y_1^{-1}, \dots, y_n^{-1}]. \]

\subsection{Unistructurality}

Recall the definition of an \emph{exchange graph} $\eg(X,Q)$ introduced \cite{FZ02} for a seed $(X,Q)$. The set of vertices $\eg_0(X,Q)$ is the set of clusters in $\mathscr{A}(X,Q)$ and two clusters are joined by an edge if and only if one can be obtained from the other by a mutation, that is, they differ by a single cluster variable.

\begin{definition} \cite{ASS14}
We say that the cluster algebra $\mathscr{A}(X,Q)$ is \emph{unistructural} if, for any subset $Y$ of cluster variables of $\mathscr{A}(X,Q)$ and for any quiver $R$ such that $(Y,R)$ generates the same set of cluster variables, then $\eg(X,Q) = \eg(Y,R)$ and the sets of clusters of $\Aa{X}{Q}$ and of $\Aa{Y}{R}$ are the same. More precisely, if $(Y,R)$ generates by all possible successive mutations a family of cluster variables $\mathscr{Y}$, then
\begin{enumerate}
\item the equality $\mathscr{Y} = \mathscr{X}$ implies that $\eg(Y,R) = \eg(X,Q)$
\item there exists a permutation $\sigma$ of $\eg_0(X,Q)$ such that $X_{\alpha} = Y_{\sigma(\beta)}$ for any $\alpha \in \eg_0(X,Q)$ and with $\beta \in \eg_0(Y,R)$.
\end{enumerate}
\end{definition}

We already know that cluster algebras of rank 2 and of Dynkin type are unistructural \cite{ASS14}. The next conjecture is from \cite{ASS14}.

\begin{conjecture} \label{unisructurality}
All cluster algebras are unistructural.
\end{conjecture}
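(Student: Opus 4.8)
The plan is to show that the cluster structure of any cluster algebra $\Aa{X}{Q}$ is recoverable from the bare set $\X$ of cluster variables, viewed as a subset of the ambient field $\mathcal{F} = \Q(x_1,\dots,x_n)$. First I would record what is already intrinsic to $\X$: the field $\mathcal{F} = \Q(\X)$ is generated by $\X$, so it does not depend on the chosen seed, and the rank $n$ equals the transcendence degree of $\mathcal{F}$ over $\Q$, hence is intrinsic as well. The goal is then to characterize the clusters purely in terms of $\X$. To this end I would call a subset $C \subseteq \X$ a \emph{Laurent base} if $\abs{C} = n$, $C$ is algebraically independent over $\Q$, and every cluster variable lies in $\Z[c^{\pm 1} : c \in C]$. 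By the Laurent phenomenon every cluster is a Laurent base, and the defining data of a Laurent base refer only to $\X$ and to the field structure of $\mathcal{F}$. The whole conjecture would therefore follow from the converse, which I record as the \textbf{Key Claim}: every Laurent base of $\X$ is a cluster of $\Aa{X}{Q}$.

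Granting the Key Claim, I would finish as follows. Suppose $(Y,R)$ generates the same family $\Y = \X$. Since being a Laurent base depends only on $\X$, the algebras $\Aa{X}{Q}$ and $\Aa{Y}{R}$ have exactly the same Laurent bases, and by the Key Claim applied to each they have the same collection of clusters; this is part $(2)$ of the definition, with $\sigma$ the bijection matching equal subsets. For part $(1)$, the vertices of each exchange graph are these common clusters, and two clusters are adjacent precisely when they share $n-1$ variables, a relation read off from the cluster set alone, using that codimension-one faces of the cluster complex lie in exactly two maximal faces. Hence $\eg(Y,R) = \eg(X,Q)$, and $\Aa{X}{Q}$ is unistructural.

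The hard part is the Key Claim, and I expect it to be the main obstacle; in full generality it is essentially open, which is why the present paper restricts to type $\At$. The natural line of attack is to combine the positivity theorem with denominator vectors: given a Laurent base $C$ that is not a cluster, expand every cluster variable in $C$ and analyze the exponents, aiming to contradict either algebraic independence or the non-negativity of numerators. An equivalent route is through the Fomin–Zelevinsky compatibility criterion recalled in the introduction, which furnishes a seed-independent test for when $x$ and $x'$ are compatible, namely that $x'$ admits a reduced Laurent expansion $\frac{P}{M}$ in a cluster containing $x$ with $M$ free of $x$; if this test can be phrased without prior knowledge of the clusters, the compatibility graph becomes intrinsic and its maximal cliques of size $n$ are simultaneously the Laurent bases and the clusters. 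For type $\At$ the combinatorics of arcs and triangulations of the annulus makes both the test and the Key Claim accessible, which is the content of this paper, but in the absence of such a model one must rule out phantom Laurent bases by algebraic means alone, and that is where the difficulty concentrates.
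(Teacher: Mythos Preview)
The statement you are addressing is a \emph{conjecture}, and the paper does not claim to prove it; the paper only establishes the special case of type $\At$ (Theorem~\ref{yeah}). Your proposal is likewise not a proof: you reduce everything to the Key Claim that every Laurent base of $\X$ is a cluster, and you yourself acknowledge that this is open in general. So there is no argument here to evaluate against the paper---what remains after your reduction is precisely the content of the conjecture. Note also that your Key Claim may be strictly stronger than what is needed: a seed $(Y,R)$ with $\Y=\X$ certainly makes $Y$ a Laurent base, but an arbitrary Laurent base need not arise as such a seed, so you could in principle be attacking a harder statement than unistructurality.

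For comparison, the paper's proof in type $\At$ does not proceed through any intrinsic characterisation of clusters. Instead it assumes $Y\subseteq\X$ contains two variables $y_i,y_j$ whose associated arcs on the annulus cross, and then, case by case on the crossing pattern (peripheral/bridging, number of intersections), produces an explicit relation of the form treated in Lemma~\ref{Laurent}. Positivity together with the algebraic independence of $Y$ then forces $y_i\notin Y$ or $y_j\notin Y$, a contradiction; hence all pairs in $Y$ are compatible and $Y$ is a cluster. The quiver $R$ is then pinned down via denominator vectors at an acyclic seed using \cite{BMRT07}. This is much more hands-on than your outline and depends essentially on the surface model, which is why the general conjecture remains open.
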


\subsection{Triangulations and universal covers}
Rather than studying cluster algebras of type $\At$ by performing mutations on their quivers, we study triangulations of annuli by flipping their arcs. The following definitions are adapted from \cite{FST08}.

A \emph{marked surface} $(S,M)$ consists of  a connected oriented 2-dimensional Riemann surface $S$ with boundary and a nonempty  finite set of \emph{marked points} $M$ in the closure of $S$ with at least one marked point on each boundary
component. Marked points in the interior of $S$ are called \emph{punctures}. An \emph{arc} $\gamma$ in $(S,M)$ is the isotopy class of a curve such that:
\begin{itemize}
\item the endpoints of the curve are in $M$;
\item the curve does not selfintersect, except that its endpoints may coincide;
\item the curve is disjoint from $M$ and the boundary of $S$ except for its endpoints;
\item the curve does not cut out an unpunctured monogon, that is the curve is not contractible into a point
of $M$.
\end{itemize}

An arc is \emph{interior} if it is not (isotopic to) an arc lying on the boundary of $S$ and it is a \emph{boundary arc} otherwise. Two arcs are \emph{compatible} if there are curves in their respective isotopy classes which do not intersect, except perhaps at their endpoints. A \emph{triangulation} is a maximal collection of pairwise compatible arcs.

Let $T$ be a triangulation of an unpunctured marked surface. A \emph{flip} $\mu_{\g}$ in $T$ replaces one particular arc $\gamma$  by the unique arc $\gamma'\neq\gamma$ which, together with the remaining arcs of $T$, forms a new triangulation. Because flips are clearly involutive, we call two triangulations \emph{flip-equivalent} if they are connected by a sequence of flips.

One can associates a quiver $Q_T$ to a triangulation $T$ of an unpunctured marked surface $(S,M)$ as follows. 
The points of $Q_T$ correspond to the interior arcs of $T$.
Let $\gamma_1$ and $\gamma_2$ be two sides of a triangle in $T$. There is an arrow $\alpha$ such that $s(\alpha) = \gamma_1$ and $t(\alpha) = \gamma_2$ if and only if $\gamma_2$ follows $\gamma_1$ with respect to the orientation of $S$ around their common vertex.
The cluster algebra $\Aa{X}{Q_T}$ given by a cluster $X$ and the quiver $Q_T$ is called the \emph{cluster algebra associated to the surface} $(S,M)$.
If $T_k$ is obtained from $T$ by a flip replacing an arc labeled $k$, then, $Q_{T_k} = \mu_k(Q_T)$ (see \cite{FST08}).
Indeed, one can show that $\Aa{X}{Q_T}$ does not depend on the particular triangulation $T$ we started from \cite{FST08}.
It is therefore possible to study cluster algebras associated to a marked surface through triangulations of this surface. More precisely, clusters are in one-to-one correspondence with triangulations and cluster variables are in one-to-one correspondence with arcs.

The exchange relation arising from flipping $\gamma$ in this figure
\begin{figure}[h!]
\includegraphics{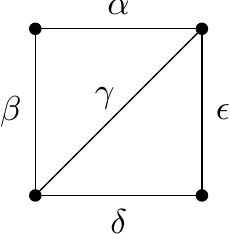}
\end{figure}
is the \emph{Ptolemy relation} $x_\gamma x_{\gamma'}=x_\alpha x_\delta + x_\beta x_\epsilon$, where $x_\eta$ denotes the cluster variable associated to the arc $\eta$ (if some of the sides are boundary arcs, the corresponding variables are replaced by 1).
We call such a set of arcs $\alpha$, $\beta$, $\delta$ and $\epsilon$ a \emph{quadrilateral} and the arcs $\g$ and $\g'$ its \emph{diagonals}.

If $Q_T$ is a quiver of type $\At_{p,q}$, then $T$ is the triangulation of an annulus with $p$ points on one boundary and $q$ points on the other, see \cite{ABCP10}. In an annulus, arcs with both endpoints on the same boundary are called \emph{peripherical} while arcs with endpoints on different boundaries are called \emph{bridging}.

\begin{example}
\begin{figure}[h!]
\includegraphics{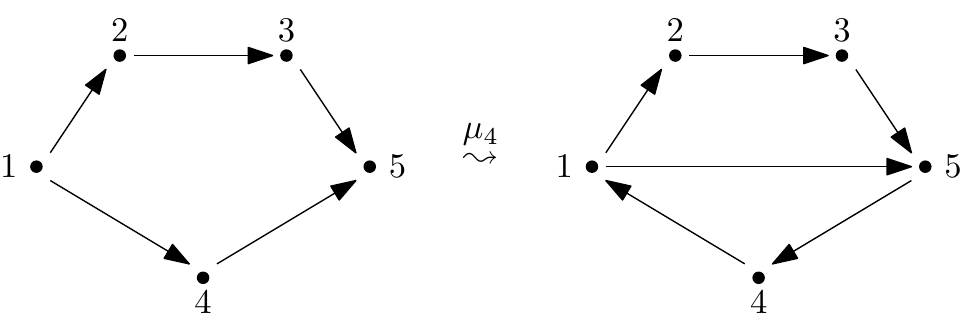}
\caption{Let $Q$ be a quiver of type $\At_{3,2}$, the first quiver shown. We mutate it in direction 4 to obtain the second quiver.}
\label{A_3,2}
\end{figure}
\begin{figure}[h!]
\includegraphics{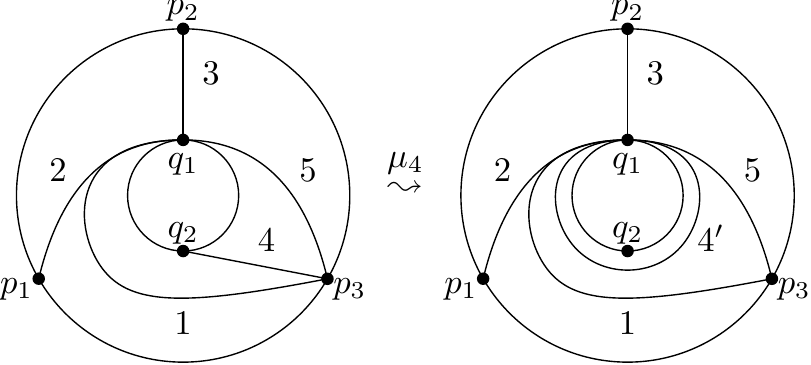}
\caption{Let $T$ be a triangulation of an annulus with 3 points on a boundary and 2 points on the other, the first triangulation shown. We obtain the second triangulation, $T'$, by flipping arc 4.}
\label{T-A_3,2}
\end{figure}
The quiver $Q$ in the figure \ref{A_3,2} and the triangulation $T$ in the figure \ref{T-A_3,2} are associated, that is $Q = Q_T$. Moreover, $\mu_4(Q)$ and $T'$ are associated.

Let associate the cluster variable $x_i$ to arc $i$. By the Ptolemy relation, we find $x_4 x_4' = x_1 \cdot 1 + 1 \cdot x_5$.

The arc 4 in $T$ is bridging, while its flip, the arc $4'$ in $T'$, is peripherical. Note that $4'$ is not isotopic to the inside boundary of the annulus, because of the another marked point on it.
\end{example}

Representing and visualizing arcs in annuli can rapidly be confusing if they wrap around the inside boundary. Therefore, one prefers to use a universal cover. 
Let $Q$, $\tilde{Q}$ be two quivers such that $\Z$ acts freely on $\tilde{Q}$, that is $n \cdot \tilde{i} \neq \ti{i}$ for all points $\ti{i} \in \ti{Q}_0$ and all $n \in \Z^{*}$.
Note $Q(i,j) = \left\lbrace \alpha \in Q \mid s(\alpha) = i \text{ and } t(\alpha) = j \right\rbrace$ and $\tilde{Q}(\tilde{i},\tilde{j}) = \left\lbrace \tilde{\alpha} \in \tilde{Q} \mid s(\tilde{\alpha}) = \tilde{i} \text{ and } t(\tilde{\alpha}) = \tilde{j} \right\rbrace$ for all $i$, $j \in Q_0$ and for all $\tilde{i}$, $\tilde{j} \in \tilde{Q}_0$.
A \emph{Galois covering} $\pi : \tilde{Q} \rightarrow Q$ is given by two maps	 $\pi_0 : \tilde{Q}_0 \rightarrow Q_0$ and $\pi_1 : \tilde{Q}_1 \rightarrow Q_1$ such that: \begin{itemize}
\item for all points $\tilde{i}$, $\tilde{j} \in \tilde{Q}_0$, we have $\pi_1 \left( \tilde{Q}_1(\tilde{i}, \tilde{j}) \right) \subseteq Q_1 \left( \pi_0 \left(\tilde{i} \right), \pi_0 \left(\tilde{j}\right) \right)$;
\item the map $\pi_0$ is surjective;
\item for all $n \in \Z$, we have $\pi \circ n = \pi$;
\item if $\tilde{i}$, $\tilde{j} \in \tilde{Q}_0$ such that $\pi_0(\tilde{i}) = \pi_0(\tilde{j})$, then there exists $n \in \Z$ such that $\tilde{j} = n \cdot \tilde{i}$;
\item for all $\ti{i} \in \ti{Q}_0$, $\pi_1$ induces two bijections $\ti{i}^{+} \rightarrow \left( \pi_0\left(\ti{i}\right) \right)^{+}$ and $\ti{i}^{-} \rightarrow \left( \pi_0\left(\ti{i}\right) \right)^{-}$.
\end{itemize}
If $Q = Q_T$ and $\tilde{Q} = \tilde{Q}_{\tilde{T}}$, it induces a map $\pi' : \tilde{T} \rightarrow T$. The triangulation $\tilde{T}$ is called the \emph{lifted triangulation} of $T$.

The elements of the fiber $\pi^{-1}(i)$ of a point $i$ in $Q_0$ are of the form $\ti{i}_n$ with $n \in \Z$. Let $\tilde{\mu}_i = \prod_{n \in \Z} {\mu_{\ti{i}_n}}$. Then, $\tilde{\mu}_i (\tilde{T}) = (\pi')^{-1}(\mu_i(T))$, see \cite{Sch10}.

\begin{example}
Triangulations in Figure \ref{T-A_3,2-revetement} are the lifted triangulations of triangulations in Figure \ref{T-A_3,2}.
\begin{figure}[h]
\includegraphics[]{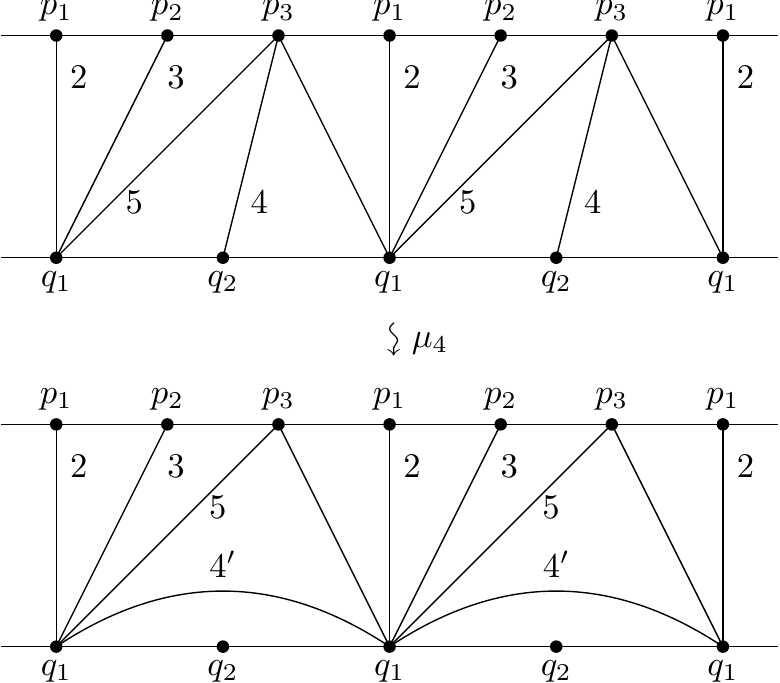}
\caption{Lifted triangulations associated to triangulations in Figure \ref{T-A_3,2}}
\label{T-A_3,2-revetement}
\end{figure}
\end{example}


\section{Unistructurality for $\At$}

In order to prove the theorem \ref{yeah}, we need this technical lemma.

\begin{lemma}\label{Laurent}
Let $x_1$ and $x_2$ be two variables from a cluster algebra $\mathscr{A}(X,Q)$ and let $\Sigma_1$, $\Sigma_2$ and $\Sigma_3$  be sums of products of cluster variables with positive coefficients such that $x_1x_2$ is not a term of these sums or is not the only one.
\begin{enumerate}[a)]
\item If $x_1x_2 = \Sigma_1$, then $x_1 \not\in X$ or $x_2 \not\in X$.
\item If $x_1x_2\Sigma_1 = \Sigma_1\Sigma_2 + \Sigma_3$, then $x_1$ or $x_2 \not\in X$.
\end{enumerate}
\end{lemma}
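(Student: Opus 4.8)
The plan is to push everything into the Laurent polynomial ring $\Z[x_1^{\pm1},\dots,x_n^{\pm1}]$ attached to the initial cluster $X$, using the Laurent phenomenon to expand every cluster variable occurring in $\Sigma_1,\Sigma_2,\Sigma_3$ and the positivity theorem to guarantee that, after expansion, all coefficients are nonnegative and no cancellation can occur. For part a) I would argue by contradiction: suppose $x_1,x_2\in X$, so the left-hand side $x_1x_2$ becomes a single Laurent monomial with coefficient $1$. Write $\Sigma_1=\sum_i c_i m_i$ with each $c_i$ a positive integer and each $m_i$ a product of cluster variables. Since every cluster variable lies in $\N[x_1^{\pm1},\dots,x_n^{\pm1}]$ and is nonzero, applying the ring homomorphism $x_i\mapsto 1$ sends each cluster variable to a positive integer $\ge 1$, hence $m_i(1,\dots,1)\ge 1$ and $\Sigma_1(1,\dots,1)\ge\sum_i c_i$. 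As the left-hand side evaluates to $1$, this forces $\Sigma_1$ to consist of a single term $m_1$ with $c_1=1$ and $m_1(1,\dots,1)=1$; consequently each cluster-variable factor $y_j$ of $m_1$ satisfies $y_j(1,\dots,1)=1$, i.e. $y_j$ is equal to a Laurent monomial with coefficient $1$ in the variables of $X$.

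The key input here is that a cluster variable which is a Laurent monomial in the cluster $X$ must already belong to $X$. In type $\At$ this follows from the surface model of Section~2: the cluster variable attached to an arc $\gamma\notin T$ (where $T$ is the triangulation corresponding to $X$) has a Laurent expansion in $X$ with at least two distinct monomials, because $\gamma$ crosses at least one arc of $T$ and the resulting combinatorial expansion records at least two terms. Granting this, every $y_j$ equals some $x_{\ell_j}\in X$, and $\prod_j x_{\ell_j}=x_1x_2$; algebraic independence of $X$ then forces $m_1$ to be exactly the product $x_1x_2$ (as a product of cluster variables), so $\Sigma_1=x_1x_2$ has $x_1x_2$ as its unique term. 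This contradicts the standing hypothesis that $x_1x_2$ is not the only term of $\Sigma_1$, which proves a).

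For part b) the same evaluation does the work more directly and the combinatorial input is not needed. Assume $x_1,x_2\in X$. Both sides of $x_1x_2\Sigma_1=\Sigma_1\Sigma_2+\Sigma_3$ lie in $\Z[x_1^{\pm1},\dots,x_n^{\pm1}]$ by the Laurent phenomenon, so applying $x_i\mapsto 1$ yields the numerical identity $\Sigma_1(1,\dots,1)=\Sigma_1(1,\dots,1)\,\Sigma_2(1,\dots,1)+\Sigma_3(1,\dots,1)$. Each $\Sigma_k(1,\dots,1)$ is a positive integer, being a nonempty sum of products of cluster variables each of which evaluates to an integer $\ge 1$; in particular $\Sigma_2(1,\dots,1)\ge 1$ and $\Sigma_3(1,\dots,1)\ge 1$. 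Then the right-hand side is at least $\Sigma_1(1,\dots,1)+1$, strictly larger than the left-hand side — a contradiction. Hence $x_1\notin X$ or $x_2\notin X$.

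The main obstacle is the single non-formal ingredient in part a): excluding the possibility that a factor $y_j$ of $m_1$ is a nontrivial Laurent monomial (one with negative exponents, with coefficient larger than $1$, or equal to $x_1x_2$ itself) rather than an element of $X$. I would dispose of this by invoking the explicit description of the cluster variables of type $\At$ via arcs in the annulus, from which the ``Laurent monomial $\Rightarrow$ initial variable'' statement is immediate; the rest — applying the Laurent phenomenon and positivity, evaluating at $x_i=1$, and using algebraic independence of $X$ — is routine bookkeeping.
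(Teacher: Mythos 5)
Your part b) is correct and follows a genuinely different route from the paper: instead of clearing denominators and comparing coefficients via positivity and algebraic independence (as the paper does, getting $M_1M_2P_3 = M_3P_1(x_1x_2M_2-P_2)$ and forcing $P_2 = x_1x_2M_2$), you apply the specialization $x_i\mapsto 1$ on $\Z[x_1^{\pm1},\dots,x_n^{\pm1}]$, use positivity to see that every cluster variable evaluates to an integer $\geq 1$, and get the numerical contradiction $\Sigma_1(1)=\Sigma_1(1)\Sigma_2(1)+\Sigma_3(1)\geq\Sigma_1(1)+1$. This is cleaner than the paper's coefficient comparison (it only needs $\Sigma_2,\Sigma_3$ nonempty with positive integer coefficients, which is the intended reading), and the same evaluation idea also disposes nicely of the multi-term case in a).

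In part a), however, there is a genuine gap. After the evaluation argument you are left needing the statement: a cluster variable whose Laurent expansion in the cluster $X$ is a single Laurent monomial (with coefficient $1$) must already lie in $X$. You justify this only through the surface model for type $\At$. But the lemma is stated for an arbitrary cluster algebra $\Aa{X}{Q}$, and this generality is essential: in the proof of Theorem \ref{yeah} the lemma is applied inside $\Aa{Y}{R}$, with the lemma's cluster being $Y$ and $R$ an \emph{unknown} quiver --- one cannot assume $\Aa{Y}{R}$ comes from an annulus (or any surface) without assuming precisely what the theorem is trying to prove. So the ``Laurent monomial $\Rightarrow$ initial variable'' step cannot be settled by the arc/snake-graph description in the case where it is actually needed. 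The statement is in fact true in general, but it is not ``routine bookkeeping'': one needs something like the theory of $F$-polynomials (a cluster variable with $F$-polynomial $1$ is initial) or the proper Laurent monomial property, which would have to be invoked explicitly. The paper's proof of a) avoids this entirely: it writes $\Sigma_1 = P_1/M_1$, clears denominators to get $M_1x_1x_2 - P_1 = 0$, and contradicts the algebraic independence of $X$, using the hypothesis that $x_1x_2$ is not the only term of $\Sigma_1$; no classification of which Laurent polynomials can be cluster variables is required. Either supply the general monomial criterion with a proper citation, or replace your reduction in a) by the paper's algebraic-independence argument.
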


\begin{proof} \hfill
\begin{enumerate}[a)]
\item Each cluster variable of $\mathscr{A}(X,Q)$ can be written as a Laurent polynomial in $X$. Therefore, $\Sigma_1$, as a sum of products of cluster variables, can also be written as a Laurent polynomial, say $\frac{P_1}{M_1}$, where $P_1$ is a polynomial in $X$ and $M_1$ is a monomial in $X$. We now have  $x_1x_2 = \frac{P_1}{M_1}$, which is equivalent to \[M_1x_1x_2 -P_1 = 0.\]
If $x_1 \in X$ and $x_2 \in X$, we would get a contradiction to the algebraic independence of $X$.

\item Let $\Sigma_i = \frac{P_i}{M_i}$, where $P_i$ is a polynomial in $X$ and $M_i$ a monomial in $X$ for $i=1,2,3$. Thereby, we obtain $x_1x_2\frac{P_1}{M_1} = \frac{P_1}{M_1}\frac{P_2}{M_2} + \frac{P_3}{M_3}$ which gives \[M_1M_2P_3 = x_1x_2M_2M_3P_1 -M_3P_1P_2 = M_3P_1 (x_1x_2M_2- P_2).\] By the positivity theorem, coefficients of the terms in $X$ in cluster variables are positive; it is in particular the case in $P_3$. Since there is a minus sign on the right side of the equality, we deduce that, either $M_3P_1P_2 = 0$, which is impossible, or each of the terms of the development of  $M_3P_2$ appears in the development of $x_1x_2M_2$ with a coefficient greater or equal.

Suppose that $x_1 \in X$ and $x_2 \in X$; in particular, they are monomials. Since $x_1x_2M_2$ is only a product of variables of $X$ without any coefficient, we deduce that $P_2 = x_1x_2M_2$, thus, $M_1M_2P_3 = 0$, which is once again a contradiction to the algebraic independence of $X$. \qedhere
\end{enumerate}
\end{proof}

\begin{theorem}\label{yeah}
Cluster algebras of type $\At$ are unistructural.
\end{theorem}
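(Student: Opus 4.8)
The plan is to work entirely on the combinatorial side, via triangulations of an annulus, and to leverage the two structural facts already assembled: the classification (type $\At_{p,q}$ corresponds to an annulus with $p$ and $q$ marked points on the two boundary components) and Lemma~\ref{Laurent}, which converts multiplicative relations among cluster variables into statements about non-membership in a given cluster. Fix a cluster algebra $\Aa{X}{Q}$ of type $\At$, realized by a triangulation $T$ of an annulus $\mathbb{A}_{p,q}$, with $\X$ its set of cluster variables, equivalently the set of (interior) arcs of $\mathbb{A}_{p,q}$. Suppose $Y\subseteq\X$ and a quiver $R$ are such that $\Aa{Y}{R}$ has the very same set of cluster variables $\Y=\X$. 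We must show the two exchange graphs coincide and the families of clusters match up.

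First I would recover the clusters of $\Aa{X}{Q}$ intrinsically from the set $\X$, i.e. without reference to $Q$. The key is a purely algebraic \emph{compatibility criterion}: two arcs $\g$, $\g'$ are compatible (lie in a common triangulation, hence a common cluster) if and only if their cluster variables $x_\g$, $x_{\g'}$ are not related by the kind of exchange relation appearing in Lemma~\ref{Laurent}. Concretely, if $\g$ and $\g'$ cross, then flipping exhibits a Ptolemy relation $x_\g x_{\g'} = x_\alpha x_\delta + x_\beta x_\epsilon$ (with boundary sides set to $1$); part a) of Lemma~\ref{Laurent} then forces $x_\g\notin Y$ or $x_{\g'}\notin Y$ for \emph{any} cluster $Y$, so crossing arcs never share a cluster of $\Aa{Y}{R}$ either. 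In the annulus one has to handle the two combinatorial types separately: two peripheral arcs, a peripheral and a bridging arc, and two bridging arcs; bridging arcs that wind differently around the core produce exchange relations with more terms, which is exactly why part b) of the lemma is stated. So the first block of work is: enumerate the possible crossing patterns of arcs in $\mathbb{A}_{p,q}$, write down for each the exchange relation produced by a flip, and invoke Lemma~\ref{Laurent} (using the universal cover / lifted triangulation to keep track of winding and to see the relations cleanly) to conclude that crossing of arcs is detected by these algebraic relations.

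Once crossing is characterized algebraically, the clusters of $\Aa{X}{Q}$ are recovered as the maximal subsets of $\X$ no two elements of which satisfy such a relation — and this description makes no reference to $Q$. Since $\Y=\X$, the same maximal subsets are exactly the clusters of $\Aa{Y}{R}$; this gives statement (2), the matching of the families of clusters (the permutation $\sigma$ is then just the identification of these combinatorially-defined maximal sets). For statement (1), that $\eg(Y,R)=\eg(X,Q)$, it remains to see that the edges agree, i.e. that whether two clusters differ by a single element is intrinsic — but two maximal compatible sets of arcs are related by a flip precisely when they differ in exactly one arc, which is again a statement about the sets themselves, so the exchange graph is reconstructed from $\X$ alone. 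Hence $\eg(X,Q)=\eg(Y,R)$.

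The main obstacle is the compatibility criterion itself: proving that \emph{every} pair of crossing arcs in the annulus gives rise to an exchange relation of one of the two shapes covered by Lemma~\ref{Laurent}, with the side condition ``$x_1x_2$ is not a term of these sums, or is not the only one'' genuinely verified. Two crossing bridging arcs with large relative winding number cross in several points, and the relevant relation is obtained not from a single flip but from resolving all crossings; one must check that after cancellation the resulting identity still has the form $x_1x_2\Sigma_1=\Sigma_1\Sigma_2+\Sigma_3$ with positive coefficients and that no degenerate cancellation makes $x_1x_2$ the unique term. Carrying this out uniformly — ideally by induction on the number of crossings, peeling off one crossing at a time in the universal cover — is the technical heart of the argument; everything after it is bookkeeping.
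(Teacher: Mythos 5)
Your overall strategy for the main step is the same as the paper's: realize the algebra by triangulations of an annulus, take two variables of $Y$ whose arcs cross, produce from flips/Ptolemy relations an identity of one of the two shapes in Lemma~\ref{Laurent}, and conclude that the two variables cannot lie in a common cluster of $\Aa{Y}{R}$, a contradiction. However, you explicitly defer what you yourself call the technical heart: for two bridging arcs crossing $n\geq 2$ times (and for two peripherical arcs crossing twice) there is no single quadrilateral, and one must actually exhibit a relation of the form $x_1x_2\Sigma_1=\Sigma_1\Sigma_2+\Sigma_3$ with the side conditions of Lemma~\ref{Laurent} verified. The paper does exactly this: it fixes explicit triangulations containing $\g_i$, performs concrete sequences of flips reaching $\g_j$, telescopes the resulting Ptolemy relations (e.g.\ $z_1z_1'z_2'z_5'=(z_1'z_2')z_7z_9+\Sigma_3$ in the two-crossing peripherical case), and then runs an induction on the winding number using the recursions $z^{(k-1)}_1z^{(k)}_1=(z^{(k-1)}_4)^2+z_2'z_3''$, etc. Your proposal only announces that such an induction ``should'' work; without carrying it out (or at least exhibiting the shape of the telescoped identity and why no degenerate cancellation occurs), the central case of the theorem is not proved.

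The second genuine gap is at the end, where you diverge from the paper. From the crossing criterion plus $|Y'|=p+q$ you correctly get that every cluster of $\Aa{Y}{R}$ is a maximal compatible set of arcs, i.e.\ a cluster of $\Aa{X}{Q}$; but you then assert that ``the same maximal subsets are exactly the clusters of $\Aa{Y}{R}$''. That converse inclusion is not automatic: $R$ is an unknown quiver, and nothing you have said guarantees that every triangulation of the annulus is reachable by mutations from the seed $(Y,R)$, which is what equality of the vertex sets of $\eg(X,Q)$ and $\eg(Y,R)$ requires. This can be repaired combinatorially (each cluster of $\Aa{Y}{R}$ has $p+q$ distinct $\eg(Y,R)$-neighbours, each differing from it in one arc; since a triangulation minus an arc has a unique completion, these are precisely its flips, and flip-connectivity of triangulations then forces surjectivity), but you do not make this argument. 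The paper instead closes the proof differently: it takes the cluster $Z$ attached to the acyclic quiver $Q_Z$ of type $\At_{p,q}$, uses uniqueness of the cluster variable with denominator $z_i$ at an acyclic seed (Theorem~2.2 of \cite{BMRT07}, following \cite{ASS14-2}) to compare the two exchange relations, and concludes $R_Z=Q_Z$ or $R_Z=Q_Z^{\text{op}}$, hence that $R$ is itself of type $\At_{p,q}$ and the two cluster structures coincide. Either route works, but as written your reconstruction of $\eg(Y,R)$ from $\X$ alone skips the step that makes it valid.
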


\begin{proof}
Let $X=\{x_1,\dots,x_{p+q}\}$ and $Y$ be algebraically independent sets on $\Q$, $Q$ a quiver of type $\At_{p,q}$ and $R$ a quiver such that the cluster variables of $\mathscr{A}(X,Q)$ and of $\mathscr{A}(Y,R)$ are the same. Remark that a consequence of that hypothesis is that $X$ et $Y$ have the same number of elements, which is the cardinality of a transcendence basis of the (common) ambient field. We show that $Y$ is a cluster of $\mathscr{A}(X,Q)$.

Assume to the contrary that $Y$ is not a cluster of $X$. Thus, there exists two variables $y_i$ and $y_j$ of $Y$ incompatible in $\mathscr{A}(X,Q)$. We know that variables of $\mathscr{A}(X,Q)$ are associated with arcs of an annulus with $p$ marked points on a border and $q$ points on the other one.
Consider the arcs $\g_i$ and $\g_j$ associated to $y_i$ and $y_j$ in $\mathscr{A}(X,Q)$. Let $i_1$ and $i_2$ be the endpoints of $\g_i$ and $j_1$ and $j_2$ be the endpoints of $\g_j$.
We have to study three cases: the arcs $\g_i$ and $\g_j$ can both be periphical, both bridging or one is periphical and the other is bridging.

\textbf{ First case: the arc $\g_i$ is periphical and the arc $\g_j$ is bridging.} Without loss of generality, assume that $i_1$, $i_2$ and $j_1$ are all on the same border. Consider the triangulation of Figure \ref{p1-t1} where $z_1$,  $z_2$, $z_3$ and $z_4$ are cluster variables of a same cluster of $\mathscr{A}(X,Q)$. Note that $z_1$ and $z_2$ could be associated to boundary arcs, in which case they are equal to $1$. Note also that it is possible that $i_1=i_2$.
\begin{figure}[htb]
\includegraphics{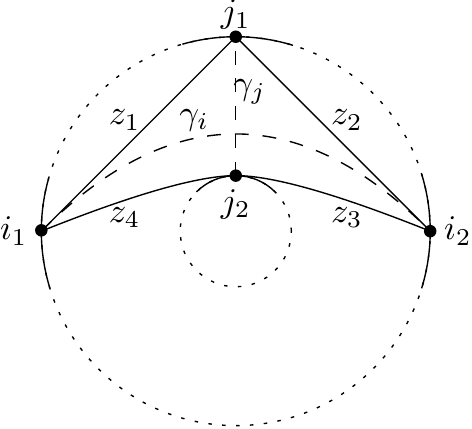}
\caption{A quadrilateral with an incompatible periphical arc and a bridging arc as diagonals}
\label{p1-t1}
\end{figure}

By the Ptolemy relation, we obtain
\[y_i y_j = z_1 z_3 + z_2 z_4. \]
Now, $z_1 z_3 + z_2 z_4$ is a sum of product of cluster variables in $\mathscr{A}(X,Q)$ so it is also the case in $\mathscr{A}(Y,R)$. Therefore,  Lemma \ref{Laurent} ensures that $y_i$ or $y_j \not\in Y$, which is a contradiction to the initial hypothesis.

\textbf{ Second case: the arcs $\g_i$ and $\g_j$ are peripherical.} If they intersect each other exactly once, there exists once again a quadrilateral with these arcs as diagonals, as shown in Figure \ref{qp2} (where possibly $i_1=j_2$), and we again obtain a contradiction to Lemma \ref{Laurent}.
\begin{figure}[htbp!]
\centering
\includegraphics{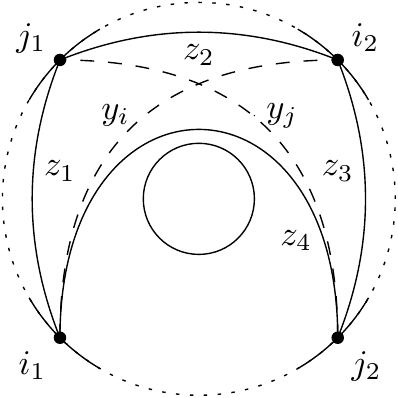}
\caption{A quadrilateral with peripherical arcs intersecting each other exactly once as diagonals}
\label{qp2}
\end{figure}

Now assume that $\g_i$ et $\g_j$ intersect each other twice. By representing them in the lifted triangulation of an annulus, one sees easily that they cannot intersect each other more than twice and that, in that case, $i_1$ and $i_2$ are necessarily distinct from $j_1$ and $j_2$. First, consider the case where $i_1$, $i_2$, $j_1$ and $j_2$ are all distinct. Let $Z$ be the cluster of $\mathscr{A}(X,Q)$ associated with the triangulation of Figure \ref{p2-u} (represented as a lifted triangulation) with $Z=\{z_1,\dots,z_{p+q}\}$ and $z_1 = y_i$.
\begin{figure}[h]
\includegraphics{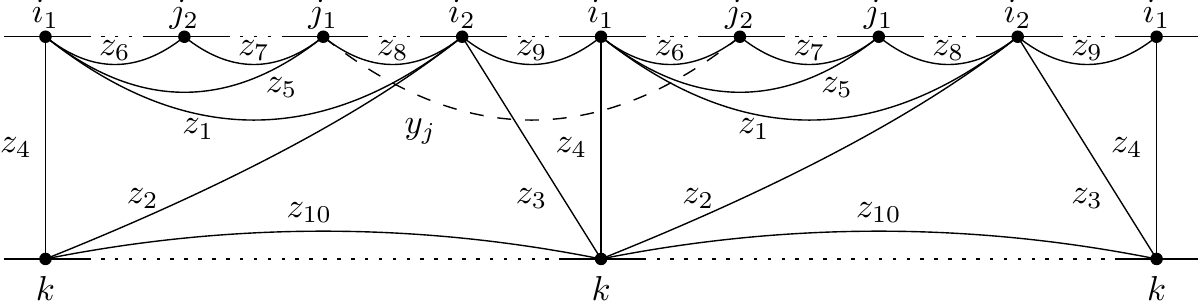}
\caption{A triangulation containing the peripherical arc $\gamma_i$ intersecting twice the peripherical arc $\gamma_j$}
\label{p2-u}
\end{figure}

Remark that $z_6, \dots, z_{10}$ could be associated with boundary arcs.
Consider the sequence of mutations in order to obtain $y_j$ as shown in figure \ref{p2}.

\begin{figure}[htbp!]
\centering
\includegraphics[trim=66pt 0pt 80pt 0pt, clip]{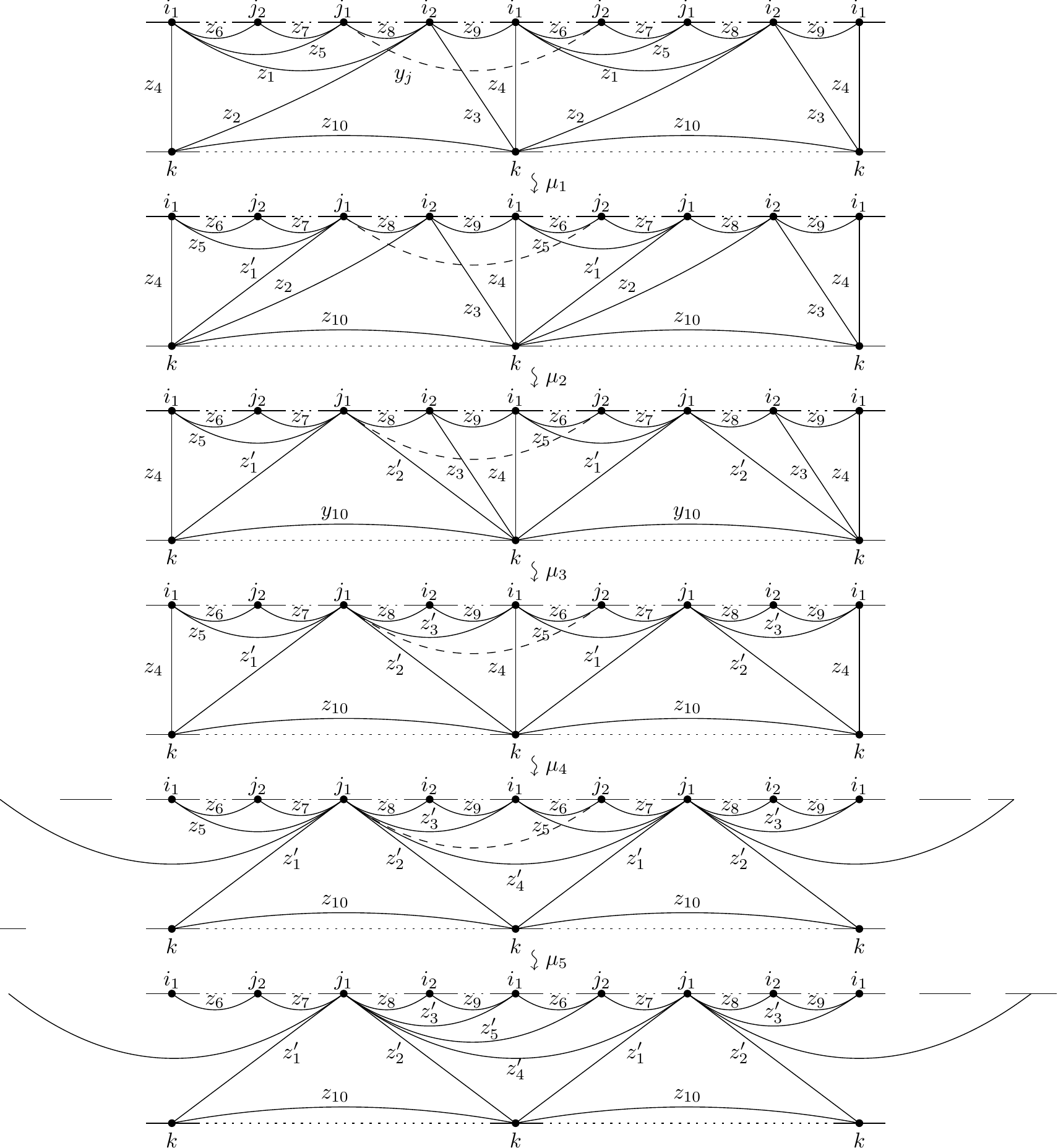}
\caption{A sequence of mutation from a triangulation containing the peripherical arc  $\gamma_i$ associated with $y_i=z_1$ to the peripherical arc $\g_j$ associated to $y_j$ and intersecting $\g_j$ twice}
\label{p2}
\end{figure}
Remark that the last triangulation of Figure \ref{p2} implies $z_5' = y_j$. Moreover, because of the Ptolemy relation, we obtain the following five equalities: 
\begin{itemize}
\item $z_1z_1' = z_2z_5 + z_4z_8$;
\item $z_2z_2' = z_1'z_3 + z_8z_{10}$;
\item $z_3z_3' = z_2'z_9 + z_4z_8$;
\item $z_4z_4' = z_1'z_3' + z_2'z_5$;
\item $z_5z_5' = z_3'z_7 + z_4'z_6$.
\end{itemize}
From these equalities, we develop the expression $z_1z_1'z_2'z_5'$ :
\begin{align*}
z_1z_1'z_2'z_5' & = (z_2z_5 + z_4z_8)z_2'z_5'\\
 & = (z_2z_2')(z_5z_5') + \Sigma_1\\
 & = (z_1'z_3 + z_8z_{10})(z_3'z_7 + z_4'z_6) + \Sigma_1\\
 & = (z_3z_3')z_1'z_7 + \Sigma_2\\
 & = (z_2'z_9 + z_4z_8)z_1'z_7 + \Sigma_2\\
 & = (z_1'z_2')z_7z_9 + \Sigma_3,
\end{align*}
where 
\begin{align*}
&\Sigma_1 = z_2'z_4z_5z_8,\\
&\Sigma_2 = z_1'z_3z_4z_6 + z_3'z_7z_8z_{10} + z_4'z_6z_8z_{10} + \Sigma_1 \text{ and} \\
&\Sigma_3 = z_1'z_4z_7z_8 + \Sigma_2.
\end{align*}
Recall that every $z_k$ and $z_k'$, being cluster variables of $\mathscr{A}(X,Q)$,  are also cluster variables of $\mathscr{A}(Y,R)$, so are Laurent polynomial in $Y$. The same applies to $\Sigma_3$ since it is a sum of products of cluster variables. By Lemma \ref{Laurent}, $y_i \not\in $ or $y_j \not\in Y$ (because $y_i=z_1$ and $y_j=z_5$), which is a contradiction.

If we have $i_1=i_2$ or $j_1=j_2$, we would arrive at the same contradiction, but the sequence of flips to achieve it would be shorter because some arcs would be equal.

\textbf{Third case: the arcs $\gamma_i$ and $\gamma_j$ are bridging.} As a first step, consider the case where  $i_1$ and $j_1$, as well as $i_2$ and $j_2$, are distinct, and where the arcs $\gamma_i$ are $\gamma_j$ intersect each other exactly once. Then, there exists a quadrilateral with $\g_i$ and $\g_j$ as diagonals shown in triangulation of the Figure \ref{t2} and we obtain the same contradiction as in the first case.

\begin{figure}[htbp!]
\includegraphics{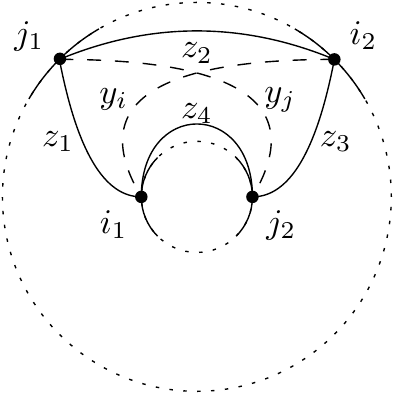}
\caption{A quadrilateral with two arcs bridging intersecting each other once as diagonals}
\label{t2}
\end{figure}

Now consider the cases where $\g_i$ et $\g_j$ intersect each other $n$ times with $n \geq 2$. Let $Z$ be the cluster of $\mathscr{A}(X,Q)$ associated to the triangulation in Figure \ref{n=2-1}

\begin{figure}[htbp!]
\centering
\includegraphics[trim=70pt 0pt 70pt 0pt, clip]{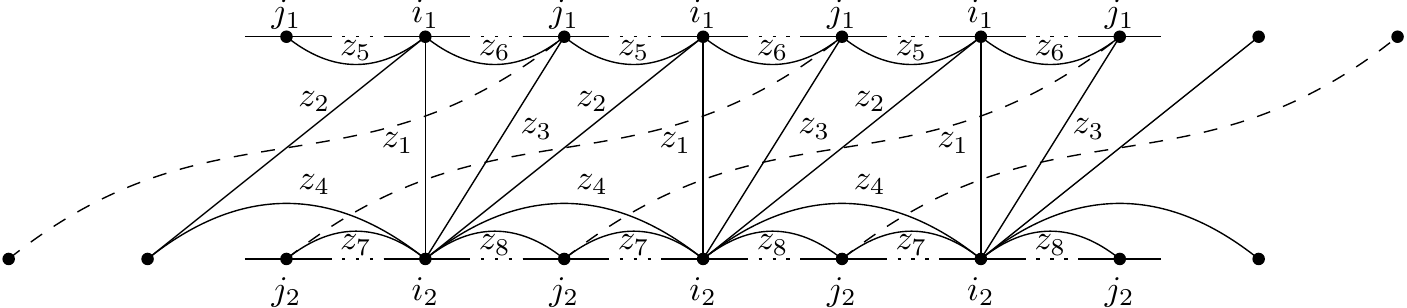}
\caption{A triangulation containing the bridging arc $\g_i$ associated to $y_i=z_1$ and intersecting $n=2$ times the bridging arc $\g_i$}
\label{n=2-1}
\end{figure}
There exists, up to symmetry, a unique arc $\g_j$ intersecting $\g_i$ $n$ times. For $n=2$, this arc is represented by a dotted line. After a sequence of flips, we obtain this arc (see Figure \ref{n=2}). 

\begin{figure}[htbp!]
\centering
\includegraphics[trim=76pt 0pt 152pt 0pt, clip]{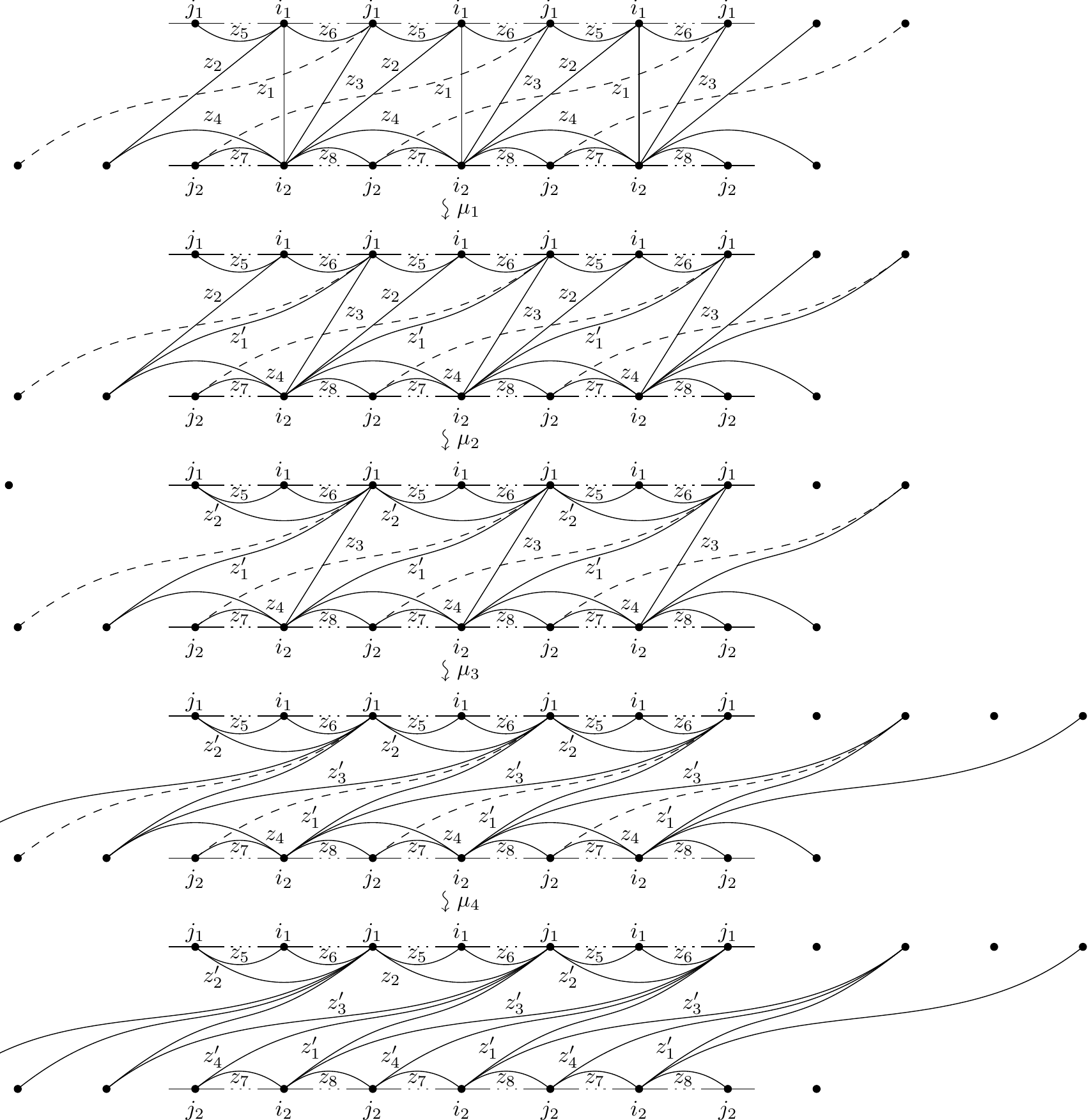}
\caption{A sequence of flips from a triangulation containing the bridging arc $\g_i$ associated to $y_i=z_1$ to a triangulation containing the bridging arc $\g_j$ associated to $y_j=z_4'$ and intersecting $n=2$ times $\g_i$}
\label{n=2}
\end{figure}

We obtain the arc $\g_j$ associated to $z_4' = y_j$. By the Ptolemy relation, we deduce the following equalities: 
\begin{itemize}
\item $z_1 z_1' = z_2 z_3 + z_4 z_6$;
\item $z_2 z_2' = z_1' z_5 + z_3 z_6$;
\item $z_3 z_3' = \left(z_1'\right)^2 + z_2' z_4$;
\item $z_4 z_4' = z_1' z_8 + z_3' z_7$.
\end{itemize}

We use some of these equalities to develop the expression $z_1 z_1' z_4'$.
\begin{align*}
z_1 z_1' z_4' &= (z_2 z_3 + z_4 z_6) z_4'\\
 & = (z_4 z_4') z_6 + \Sigma_1\\
 & = (z_1' z_8 + z_3' z_7) z_6 + \Sigma_1\\
 & = z_1' z_6 z_8 + \Sigma_2
\end{align*}
where
$\Sigma_1 = z_2 z_3 z_4'$ et $\Sigma_2 = z_3'z_6z_7 + \Sigma_1$.
Since $z_1 = x_i'$ et $z_4' = x_j'$, we obtain once again a contradiction as a consequence of Lemma \ref{Laurent}.

If the $\gamma_i$ and $\gamma_j$ intersect each other $n=3$ times, we have to achieve an additional mutation from the last triangulation of the Figure \ref{n=2} to obtain the arc $\g_j$; this mutation is shown in Figure \ref{n=3}. Note that, in this figure, the arc $\g_i$ is represented by a dotted line.

\begin{figure}[htbp!]
\centering
\includegraphics{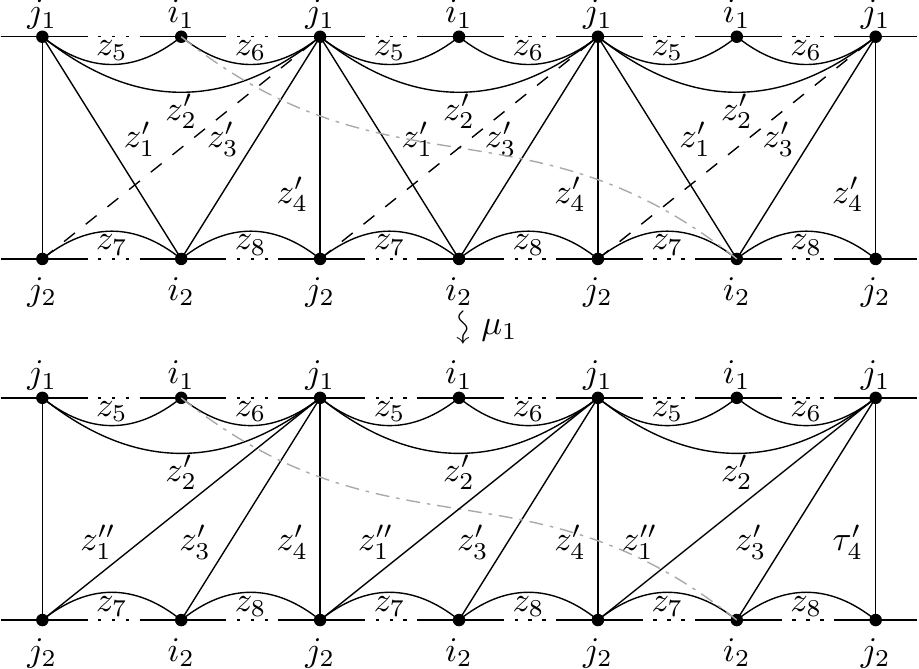}
\caption{Mutation to obtain peripherical arc $\g_j$ associated to $y_j=z_1''$ and intersecting $n=3$ times $\g_i$}
\label{n=3}
\end{figure}

From the mutations of variables, we obtain the equality $z_1' z_1'' = z_2'z_7 + z_3' z_4'$. We develop the product $z_1 z_1' z_3' z_1''$:
\begin{align}
z_1 z_1' z_3' z_1'' &= (z_2 z_3 + z_4 z_6) z_3' z_1'' \notag \\
& = (z_3 z_3') z_1'' z_2 + \Sigma_4 \notag\\
& = \left( (z_1')^2 + z_2'z_4 \right)z_1'' z_2 + \Sigma_4 \notag\\
& = (z_1' z_1'') z_1' z_2 + \Sigma_5 \notag\\
& = (z_2'z_7 + z_3' z_4') z_1' z_2 + \Sigma_5 \notag\\ 
& = (z_1' z_3') z_2 z_4' + \Sigma_6 \label{3.1}
\end{align}
where $\Sigma_4 = z_1''z_3'z_4z_6$, $\Sigma_5 = z_1'' z_2 z_2' z_4 + \Sigma_4$ and $\Sigma_6 = z_1' z_2 z_2' z_7 + \Sigma_5$.
Using Lemma \ref{Laurent}, we find once again $z_1 \not\in Y$ or $z_1'' \not\in Y$, so $y_i$ or $y_j \not\in Y$.

Now consider the case where $n=4$, that is where the arcs $\g_i$ and $\g_j$ intersect each other four times. We obtain $\g_j$ from last triangulation of Figure \ref{n=3} after two other mutations, shown at Figure \ref{n=4}.

\begin{figure}[htbp!]
\centering
\includegraphics[trim=70pt 0pt 70pt 0pt, clip]{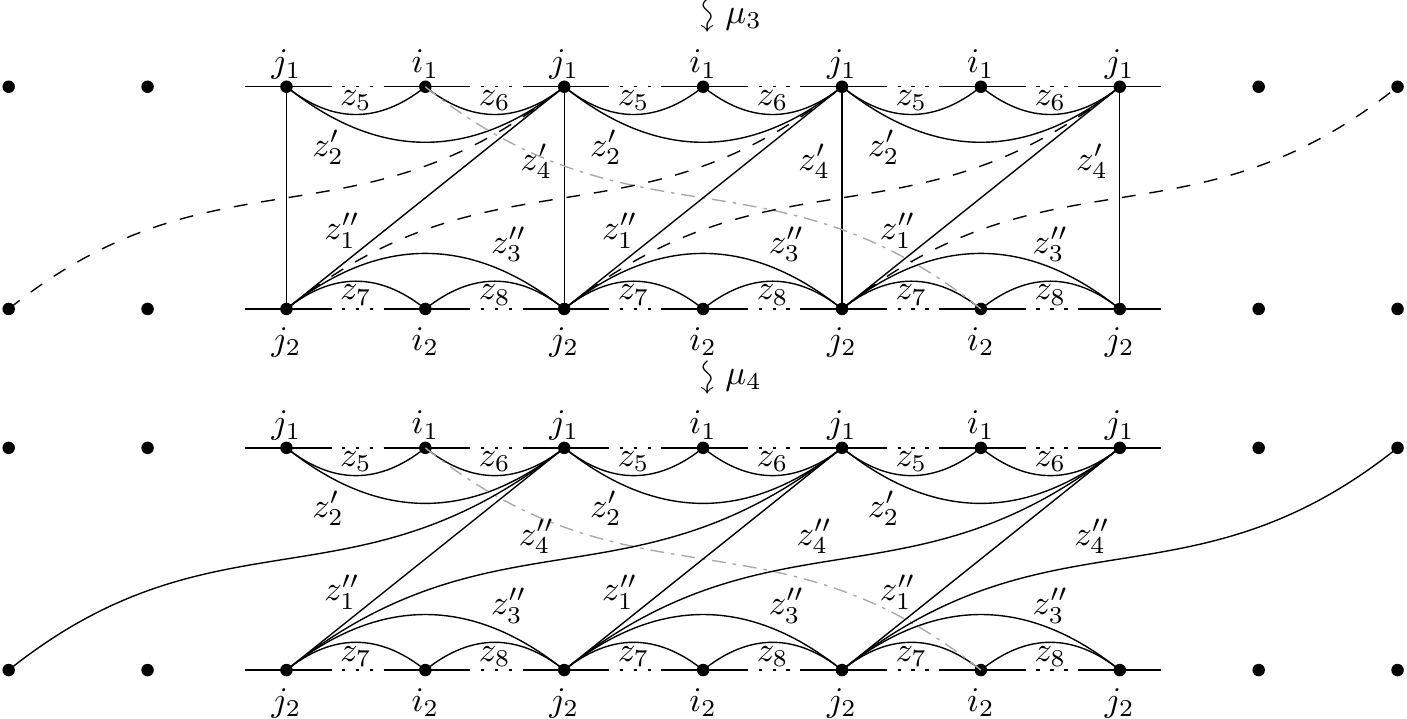}
\caption{A sequence of mutations to obtain the peripherical arc $\g_j$ associated with $y_j=z_4''$ and intersecting $n=3$ times $\g_i$}
\label{n=4}
\end{figure}

We obtain equalities 
$z_3' z_3'' = z_1''z_8 + z_4'z_7$
and $z_4'z_4'' =\left( z_1''\right)^2 + z_2'z_3''$
with the Ptolemy relation.

From now on, we can consider the general case where $\g_i$ and $\g_j$ intersect each other $n$ times, with $n\geq 4$.
Denote $\mu_0 = \mu_3 \mu_1 \mu_4 \mu_3 \mu_2 \mu_1$ (it is all mutations until now achieved except for the last one).
Denote also by $z^{(k)}_1$ et $z^{(k)}_4$ the last variables obtained by sequences of mutations $(\mu_1 \mu_4)^{k-2}\mu_0$ and $\mu_2(\mu_1 \mu_4)^{k-2}\mu_0$ respectively, with $k \geq 3$.

A simple induction yields 
\[z^{(k-1)}_1z^{(k)}_1 = \left(z^{(k-1)}_4\right)^2 + z_2'z_3''\]
and \[z^{(k-1)}_4z^{(k)}_4 = \left(z^{(k)}_1\right)^2 + z_2'z_3''.\]
Moreover, from Figure \ref{n=4}, remark by induction that the arc associated to $z_1^{(k)}$ intersects $2k-1$ times $\gamma_i$ and the arc associated to $z_2^{(k)}$ intersects it $2k$ times for $k \geq 3$.
Using equality \ref{3.1}, we obtain 
\begin{align*}
z_1 z_1' z_3' z_1'' z_4''
&= \left((z_1' z_3') z_2 z_4' + \Sigma_6\right) z_4'' \\
&= (z_4' z_4'') z_1' z_3' z_2 + z_4''\Sigma_6 \\
&= \left( \left( z_1''\right) ^2 + z_2'z_3'' \right) z_1' z_3' z_2 + z_4''\Sigma_6 \\
&= z_1' z_3' \left(z_1''\right)^2 z_2  + \Sigma_7,
\end{align*}
where $\Sigma_7 = z_1' z_2 z_2' z_3' z_3'' + z_4''\Sigma_6$.
Hence, we deduce by induction
\[z_1 z_1' z_3' \prod_{k=2}^{m-1} \left( z_1^{(k)}z_4^{(k)} \right) z_1^{(m)} = z_1'z_3'\prod_{k=2}^{m-1} \left( z_1^{(k)}z_4^{(k)} \right)\left(z_2^{(m-1)}\right)^2 + \Sigma_{2m+2} \]
and
\[z_1 z_1' z_3' \prod_{k=2}^m \left( z_1^{(k)}z_4^{(k)} \right) = z_1'z_3'\prod_{k=2}^{m-1} \left( z_1^{(k)}z_4^{(k)} \right)\left(z_1^{(m)}\right)^2 + \Sigma_{2m+3} \]
for $m \geq 3$ and where $\Sigma_{2m+2}$ and $\Sigma_{2m+3}$ are sums of products of cluster algebras with positive coefficients.

Using Lemma \ref{Laurent}, we conclude that $z_1 \not\in Y$ or $z_1^{(k)} \not\in Y$ for all $k \geq 3$. Similarly, $z_1 \not\in Y$ or $z_2^{(k)} \not\in Y$ for all $k \geq 3$. If $i_1=j_1$ or $i_2=j_2$, the proof technique would be similar, but we would consider less arcs in triangulations and, hence, there would be less mutations to achieve. Thus, if $y_j$ is associated with a bridging arc intersecting $\gamma_i$ (regardless of the number of crossings), $y_i \not\in Y$ or $y_j \not\in Y$.

Since we have proved that $y_i$ and $y_j$ cannot be associated with a peripherical arc and a bridging arc, to two peripherical arcs or to two bridging arcs intersecting each other in $\mathscr{A}(X,Q)$, then they are associated with two compatible arcs of $\mathscr{A}(X,Q)$. Hence, $y_i$ and $y_j$ are compatible variables in $\mathscr{A}(X,Q)$ and, since $|Y| = |X|$, $Y$ is a cluster of $\mathscr{A}(X,Q)$.

To show that cluster algebras of type $\At$ are unistructural, we still have to show that $R$ is a quiver of type $\At$. To do so, we adapt the proof from \cite{ASS14-2} for the Dynkin case. Let $Z = \{z_1, \dots z_{p+q}\}$ be a cluster of $\mathscr{A}(X,Q)$ associated to the following quiver, denoted by $Q_Z$:
\begin{figure}[H]
\includegraphics{A_p,q.pdf}
\end{figure}
By the first part of this proof, we know that $Z$ is also a cluster $\mathscr{A}(Y,R)$, but not necessarily associated to the same quiver. Denote by $R_{Z}$ the quiver of $\mathscr{A}(Y,R)$ associated to $Z$ and $\mu_{Q_Z}(z_i)$ and $\mu_{R_Z}(z_i)$ the new variables created by mutation in direction $i$ of $Z$ in $\mathscr{A}(X,Q)$ and in $\mathscr{A}(Y,R)$ respectively. Both variables are Laurent polynomials in $Z$ with $z_i$ as denominator. More precisely,
\[ \mu_{Q_Z}(z_i) = \frac{ \prod\limits_{\alpha \in i^{+} \text{ in } Q_Z}z_{t(\alpha)} + \prod\limits_{\alpha \in i^{-}\text{ in } Q_Z}z_{s(\alpha)} } {z_i} \]
and
\[ \mu_{R_Z}(z_i) = \frac{ \prod\limits_{\alpha \in i^{+} \text{ in } R_Z}z_{t(\alpha)} + \prod\limits_{\alpha \in i^{-}\text{ in } R_Z}z_{s(\alpha)} } {z_i}. \]
As the quiver $Q_Z$ is acyclic, there exists a unique variable of $\mathscr{A}(X,Q)$ with $z_i$ as denominator, see theorem 2.2 in \cite{BMRT07}. This variable is also unique in $\mathscr{A}(Y,R)$, hence
\[ \prod\limits_{\alpha \in i^{+} \text{ in } Q_Z}z_{t(\alpha)} = \prod\limits_{\alpha \in i^{+} \text{ in } R_Z}z_{t(\alpha)} 
\text{ and }
\prod\limits_{\alpha \in i^{- }\text{ in } Q_Z}z_{s(\alpha)} = \prod\limits_{\alpha \in i^{-} \text{ in } R_Z}z_{s(\alpha)} 
\]
or 
\[ \prod\limits_{\alpha \in i^{+} \text{ in } Q_Z}z_{t(\alpha)} = \prod\limits_{\alpha \in i^{-} \text{ in } R_Z}z_{s(\alpha)}
\text{ and }
\prod\limits_{\alpha \in i^{-} \text{ in } Q_Z}z_{s(\alpha)} = \prod\limits_{\alpha \in i^{-} \text{ in } R_Z}z_{t(\alpha)}. \]
As $i$ is arbitrary and $Q$ is connected, it follows that $R_Z = Q_Z$ or $R_X = Q_Z^{\text{op}}$, see \cite{ASS12}.
Then, $R$ is of type $\At_{p,q}$, which proves that cluster algebras of type $\At$ are unistructural.
\end{proof}

\subsection{Consequence} Recall the definition of a cluster automorphism from \cite{ASS12}. Let $\Aa{X}{Q}$ be a cluster algebra, and let $f : \Aa{X}{Q} \rightarrow \Aa{X}{Q}$ be an automorphism of $\Z$-algebras.
Then $f$ is called a \emph{cluster automorphism} if there exists a seed $(Y,R)$ of $\Aa{X}{Q}$, such that
the following conditions are satisfied: \begin{itemize}
\item $f(Y)$ is a cluster in $\Aa{X}{Q}$;
\item $f$ is compatible with mutations, that is, for every $y_k \in Y$, we have \[f\left(\mu_{y_k}(Y)\right) = \mu_{f(y_k)}(f(Y)).\]
\end{itemize}
 In \cite{Sal14}, the author defines another notion of automorphism of a cluster algebra: this is an automorphism of the ambient field which restricts to a permutation of the set of cluster variables.

\begin{crl}
Let $\Aa{X}{Q}$ be a cluster algebra of type $\At$. Then $f:\Aa{X}{Q} \rightarrow \Aa{X}{Q}$ is a cluster automorphism
if and only if $f$ is an automorphism of the ambient field which restricts to a permutation of the set of cluster variables.
\end{crl}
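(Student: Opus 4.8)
The plan is to deduce the corollary directly from Theorem~\ref{yeah}; the forward implication holds for every cluster algebra and only the converse uses unistructurality.

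\emph{Forward implication.} If $f:\Aa{X}{Q}\to\Aa{X}{Q}$ is a cluster automorphism, then, as recorded in \cite{ASS12}, $f$ extends to an automorphism of the ambient field $\mathcal{F}$ (the field of fractions of $\Aa{X}{Q}$), $f^{-1}$ is again a cluster automorphism, and $f$ commutes with every mutation issued from the seed witnessing the cluster automorphism property. Since every cluster of $\Aa{X}{Q}$ is reached from that seed by a sequence of mutations, $f$ carries clusters to clusters, so $f(\mathscr{X})\subseteq\mathscr{X}$; applying the same to $f^{-1}$ gives $f(\mathscr{X})=\mathscr{X}$. I would simply quote these facts.

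\emph{Converse.} Let $f$ be an automorphism of $\mathcal{F}$ whose restriction to $\mathscr{X}$ is a permutation. Since $f$ fixes $\Q$ pointwise and permutes the generating set $\mathscr{X}$ of $\Aa{X}{Q}$, it restricts to a $\Z$-algebra automorphism of $\Aa{X}{Q}$. Consider the initial seed $(X,Q)$ and transport the quiver along $f$: let $R$ be the quiver obtained from $Q$ via the bijection $x_i\mapsto f(x_i)$, so that $(f(X),R)$ is literally ``$f$ applied to $(X,Q)$''. As $f(X)$ is the image of an algebraically independent set under a field automorphism, it is algebraically independent over $\Q$, hence $(f(X),R)$ is a genuine seed. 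Applying $f$ to each exchange relation, one checks by induction on the length of mutation sequences that $f$ maps the mutation process of $(X,Q)$ bijectively onto that of $(f(X),R)$; in particular the family of cluster variables obtained from $(f(X),R)$ equals $f(\mathscr{X})=\mathscr{X}$.

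Now I would invoke Theorem~\ref{yeah}. Since $Q$ is of type $\At$ and $\Aa{f(X)}{R}$ has the same set of cluster variables as $\Aa{X}{Q}$, unistructurality yields that $f(X)$ is a cluster of $\Aa{X}{Q}$ and that $\eg(f(X),R)=\eg(X,Q)$, with the two cluster algebras having the same set of clusters. The first assertion is exactly the first requirement in the definition of a cluster automorphism, for the seed $(X,Q)$. For the second requirement, fix $x_k\in X$: in $\Aa{X}{Q}$ there is a unique cluster variable $w$ with $\bigl(f(X)\setminus\{f(x_k)\}\bigr)\cup\{w\}$ a cluster, this being the well-definedness of the edges of $\eg(X,Q)$ at the vertex $f(X)$. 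On the other hand $f(\mu_{x_k}(X))=\bigl(f(X)\setminus\{f(x_k)\}\bigr)\cup\{f(x_k')\}$, which by the transport argument is a cluster of $\Aa{f(X)}{R}$ adjacent to $f(X)$, hence, by the equality of exchange graphs and of cluster sets, a cluster of $\Aa{X}{Q}$ adjacent to $f(X)$ that differs from it only in the slot of $f(x_k)$. Therefore $w=f(x_k')$, i.e.\ $f(\mu_{x_k}(X))=\mu_{f(x_k)}(f(X))$ in $\Aa{X}{Q}$, and $f$ is a cluster automorphism. Granting Theorem~\ref{yeah}, the only delicate point — the step I expect to be the main (and fairly modest) obstacle — is the verification of compatibility with mutations, where one must keep apart the transported quiver $R=f(Q)$ and the quiver that $\Aa{X}{Q}$ intrinsically attaches to the cluster $f(X)$; it is precisely the ``$\eg(Y,R)=\eg(X,Q)$ together with equality of the cluster sets'' conclusion of unistructurality, rather than the bare statement that $f(X)$ is a cluster, that lets one bypass comparing these two quivers directly.
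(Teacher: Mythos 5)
Your argument is correct, but it takes a different route from the paper: the paper's proof of this corollary is a two-line citation, namely Lemma~5.2 of \cite{ASS14} (which states that the desired characterization of cluster automorphisms holds for every unistructural cluster algebra) combined with Theorem~\ref{yeah}. What you have done, in effect, is reconstruct the content of that lemma in the type $\At$ setting: transporting the seed $(X,Q)$ along $f$ to a seed $(f(X),R)$ with the same set of cluster variables, invoking unistructurality to conclude that $f(X)$ is a cluster with the same exchange graph and cluster set, and then deducing compatibility with mutations. That is a perfectly valid and more self-contained presentation, at the price of having to import two auxiliary facts that the citation route hides: for the forward implication, that a cluster automorphism extends to the ambient field and permutes the set of cluster variables (these are indeed in \cite{ASS12} and fine to quote); and, in your converse, the step where you identify $f(\mu_{x_k}(X))$ with $\mu_{f(x_k)}(f(X))$ uses not just ``well-definedness of the edges of $\eg(X,Q)$'' but the converse statement, namely that \emph{any} two clusters differing in a single slot are related by the mutation at that variable (uniqueness of the exchange partner). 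This is exactly where one must be careful not to conflate the transported quiver $R$ with the quiver that $\Aa{X}{Q}$ attaches to the cluster $f(X)$, as you note; for type $\At$ it is immediate from the surface model, since the arcs of a triangulation minus one arc bound a quadrilateral admitting exactly two diagonals, so it would be worth stating that justification (or citing it) explicitly. In short: the paper buys brevity by outsourcing the whole argument to \cite{ASS14}, while your version makes the mechanism visible but should pin down the uniqueness-of-flip step and the quoted facts about cluster automorphisms.
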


\begin{proof}
Lemma 5.2 in \cite{ASS14} states that this result is true for unistructural cluster algebras. It then follows from Theorem \ref{yeah}.
\end{proof}

It is conjectured in \cite{ASS14} that this result is true for any cluster algebra. Indeed, it would follow from Conjecture \ref{unisructurality} and Lemma 5.2 in \cite{ASS14}.


\bibliographystyle{alpha}
\bibliography{bibliographie}

\begin{thebibliography}{ABCJP10}

\bibitem[ABCJP10]{ABCP10}
Ibrahim Assem, Thomas Brüstle, Gabrielle Charbonneau-Jodoin, and Pierre-Guy
  Plamondon.
\newblock Gentle algebras arising from surface triangulations.
\newblock {\em Algebra and Number Theory}, 4(2):201--229, 2010.

\bibitem[ASS12]{ASS12}
Ibrahim Assem, Ralf Schiffler, and Vasilisa Shramchenko.
\newblock Cluster automorphisms.
\newblock {\em Proceedings of the London Mathematical Society},
  3(104):1271--1302, 2012.

\bibitem[{\aaa}ASS14a]{ASS14}
Ibrahim {\aaa}~Assem, Ralf Schiffler, and Vasilisa Shramchenko.
\newblock Cluster automorphisms and compatibility of cluster variables.
\newblock {\em Glasgow Mathematics Journal}, 56(3):705--718, 2014.

\bibitem[ASS14b]{ASS14-2}
Ibrahim Assem, Ralf Schiffler, and Vasilisa Shramchenko.
\newblock Addendum to cluster automorphisms and compatibility of cluster
  variables.
\newblock {\em Glasgow Mathematics Journal}, 56(3):719--720, 2014.

\bibitem[BMRT07]{BMRT07}
Aslak~B. Buan, Robert~J. Marsh, Idun Reiten, and Gordana Todorov.
\newblock Clusters and seeds in acyclic cluster algebras.
\newblock {\em Proceedings of the American Mathematical Society},
  135(10):3049--3060, 2007.

\bibitem[FST08]{FST08}
Sergey Fomin, Michael Shapiro, and Dylan Thurston.
\newblock Cluster algebras and triangulated surfaces. part i: Cluster
  complexes.
\newblock {\em Acta Mathematica}, 201(1):83--146, 2008.

\bibitem[FZ02]{FZ02}
Sergey Fomin and Andrei Zelevinsky.
\newblock Cluster algebras {I}: Foundations.
\newblock {\em Journal of the American Mathematical Society}, 15(2):497--529,
  2002.

\bibitem[FZ07]{FZ07}
Sergey Fomin and Andrei Zelevinsky.
\newblock Cluster algebras {IV}: Coefficients.
\newblock {\em Compositio Mathematica}, 143(1):112--164, 2007.

\bibitem[LS15]{LS15}
Kyungyong Lee and Ralf Schiffler.
\newblock Positivity for cluster algebras.
\newblock {\em Annals of Mathematics}, 182(1):73--125, 2015.

\bibitem[Sal14]{Sal14}
Ibrahim Saleh.
\newblock Exchange maps of cluster algebras.
\newblock {\em International Electronic Journal of Algebra}, 16:1--15, 2014.

\bibitem[Sch10]{Sch10}
Ralf Schiffler.
\newblock On cluster algebras arising from unpunctured surfaces {II}.
\newblock {\em Advances in Mathematics}, 223(6):1885 -- 1923, 2010.

\bibitem[ST09]{ST09}
Ralf Schiffler and Hugh Thomas.
\newblock On cluster algebras arising from unpunctured surfaces.
\newblock {\em International Mathematics Research Notices},
  2009(17):3160--3189, 2009.

\end{thebibliography}

\end{document}